\author{Liran Shaul}
\address{Universiteit Antwerpen, Departement Wiskunde-Informatica, Middelheim campus,
Middelheimlaan 1,
2020 Antwerp, Belgium}
\email{Liran.Shaul@uantwerpen.be}
\newtheorem{thm}[equation]{Theorem}
\newtheorem{cor}[equation]{Corollary}
\newtheorem{prop}[equation]{Proposition}
\theoremstyle{definition}
\newtheorem{dfn}[equation]{Definition}
\newtheorem{rem}[equation]{Remark}
\newtheorem{exa}[equation]{Example}
\newtheorem{setup}[equation]{Setup}
\newcommand{\inj}{\hookrightarrow}
\newcommand{\surj}{\twoheadrightarrow}
\newcommand{\opn}{\operatorname}
\newcommand{\mfrak}[1]{\mathfrak{#1}}
\newcommand{\mrm}[1]{\mathrm{#1}}
\renewcommand{\k}{\Bbbk}
\renewcommand{\a}{\mfrak{a}}
\keywords{derived Hochschild cohomology, dualizing complex, symmetric monoidal structure}
\thanks{{\em Mathematics Subject Classification} 2010.
13D03, 18E30, 18D10}
\begin{document}
\title{Relations between derived Hochschild functors via twisting}
 
\begin{abstract}
Let $\k$ be a regular ring, and let $A,B$ be essentially finite type $\k$-algebras. For any functor $F:\mrm{D}(\opn{Mod}A)\times\dots\times\mrm{D}(\opn{Mod}A)\to\mrm{D}(\opn{Mod}B)$ between their derived categories, we define its twist $F^{!}:\mrm{D}(\opn{Mod}A)\times\dots\times\mrm{D}(\opn{Mod}A)\to\mrm{D}(\opn{Mod}B)$ with respect to dualizing complexes, generalizing Grothendieck's construction of $f^{!}$. We show that relations between functors are preserved between their twists, and deduce that various relations hold between derived Hochschild (co)-homology and the $f^{!}$ functor. We also deduce that the set of isomorphism classes of dualizing complexes over a ring (or a scheme) form a group with respect to derived Hochschild cohomology, and that the twisted inverse image functor is a group homomorphism.
\end{abstract}

\maketitle

\section{Introduction}

All rings in this note are commutative. Let $\k$ be a regular noetherian ring of finite Krull dimension. Let $f:A\to B$ be a map between two essentially finite type $\k$-algebras. Grothendieck duality theory, whose details first appeared in \cite{RD}, centers around the twisted inverse image functor $f^{!}:\mrm{D}^{+}_{\mrm{f}}(\opn{Mod} A) \to \mrm{D}^{+}_{\mrm{f}}(\opn{Mod} B)$ (See \cite{Li} for a modern account). Under the above assumption on $\k$, this functor may be constructed as a twist of the inverse image functor $\mrm{L}f^{*} (-) :=  B\otimes^{\mrm{L}}_A -$. The twist is given by $f^{!}(-) := D_B ( \mrm{L}f^{*}( D_A(-)) )$ where for an essentially finite type $\k$-algebra $g:\k \to C$, we have set $D_C(-) := \mrm{R}\opn{Hom}_C(-,R_C)$, where $R_C = g^{!}(\k)$ is the canonical (or rigid) dualizing complex over $C$ relative to $\k$. Similarly to this construction, given any functor $F$ from $\mrm{D}_{\mrm{f}}(\opn{Mod} A)$ to $\mrm{D}_{\mrm{f}}(\opn{Mod} B)$, one may construct the twist of $F$ by declaring $F^!(-) := D_B (F(D_A(- ))$.  Under suitable finiteness assumptions, if $F,G$ and $H$ are three functors of this form, and if $F \cong G\circ H$, then it is easy to see that $F^{!} \cong G^{!} \circ H^{!}$. This means that relations between such functors give rise to relations between their twists. 

If the ring $A$ is projective over $\k$, then the Hochschild cohomology functor of $A$ over $\k$ is defined by $\opn{Ext}^{*}_{A\otimes_{\k} A}(A,-)$. When dropping the projectivity assumption, an important variation of this construction is given by Shukla cohomology, also known as derived Hochschild cohomology. This functor, recently studied in great detail in \cite{AILN}, is defined by the formula
\[
\mrm{R}\opn{Hom}_{A\otimes^{\mrm{L}}_{\k} A}(A,-)
\]
where the derived tensor product $A\otimes^{\mrm{L}}_{\k} A$ is taken in the category of DG-algebras. Taking the coefficients complex to be of the form $M \otimes^{\mrm{L}}_{\k} N$ where $M,N \in \mrm{D}(\opn{Mod} A)$, it was shown in \cite[Theorem 4.1]{AILN}, under suitable technical assumptions, that this functor has a particularly nice reduction formula: There is a bifunctorial isomorphism 
\begin{equation}\label{eqn:red-hoch}
\mrm{R}\opn{Hom}_{A\otimes^{\mrm{L}}_{\k} A}(A,M \otimes^{\mrm{L}}_{\k} N) \cong \mrm{R}\opn{Hom}_A(\mrm{R}\opn{Hom}_A(M,R_A),N).
\end{equation}
We will see below that the right hand side of this formula is canonically isomorphic to the twist of the bifunctor $-\otimes^{\mrm{L}}_A -$. This suggests the notation $-\otimes^{!}_A -$ for this functor. A similar result (\cite[Theorem 4.6]{AILN}) was given for derived Hochschild homology, and, similarly, we interpret this result by showing that it is canonically isomorphic to the twist of the bifunctor $\mrm{R}\opn{Hom}_A(-,-)$, which suggests the notation $\opn{Hom}^{!}_A(-,-)$ for this functor. 

Thus, having identified the twists of the inverse image functor, the derived tensor functor and the derived hom functor, we will immediately deduce various relations that hold between the twisted inverse image, the derived Hochschild homology and the derived Hochschild cohomology functors. 

\section{Twisted functors}

\begin{setup}
Throughout this section, $\k$ will be a fixed regular noetherian ring of finite Krull dimension.
\end{setup}

Let $A$ be an essentially finite type $\k$-algebra. This means that the structure map $\k \to A$ may be factored as
\[
\k \inj \k[x_1,\dots,x_n] \inj U^{-1}\k[x_1,\dots,x_n] \surj A
\]
where $U \subseteq \k[x_1,\dots,x_n]$ is some multiplicatively closed set. In the category of $\k$-algebras, the canonical dualizing complex of $A$ is given by
\[
R_A := \mrm{R}\opn{Hom}_{U^{-1}\k[x_1,\dots,x_n]}(A, \Omega^n_{U^{-1}\k[x_1,\dots,x_n]/\k} [n]),
\]
where $\Omega^n$ is the module of K\"ahler $n$-differentials. This construction is independent of the chosen factorization (see for example \cite[Theorem 1.1]{AILN}).

An intrinsic characterization of the complex $R_A$ is given by the important property of rigidity: recall that according to \cite[Definition 2.1]{YZ1}, a pair $(R,\rho)$ where $R\in \mrm{D}^{\mrm{b}}_{\mrm{f}}(\opn{Mod} A)$, and 
\[
\rho: R \to \mrm{R}\opn{Hom}_{A\otimes^{\mrm{L}}_{\k} A}(A,R\otimes^{\mrm{L}}_{\k} R)
\]
is an isomorphism, is called a rigid complex over $A$ relative to $\k$. If moreover the complex $R$ is a dualizing complex then $(R,\rho)$ is called a rigid dualizing complex over $A$ relative to $\k$. This notion originated in \cite{VdB}. It is shown in \cite[Theorem 3.6]{YZ1} that a rigid dualizing complex exists, and is unique in a strong sense (see also \cite[Theorem 8.5.6]{AIL1} for a stronger existence result). It is given by the complex $R_A$ defined above.

In the remaining of this note, for any essentially finite type $\k$-algebra $A$, we will denote by $R_A$ the canonical (=rigid) dualizing complex of $A$, and by $D_A$ the functor $D_A(M) := \mrm{R}\opn{Hom}_A(M,R_A)$. In a somewhat unorthodox manner, we will also set for $n>1$, 
\[
D_A(M_1,\dots,M_n):=(D_A(M_1),\dots,D_A(M_n)).
\]
For a category $\mathcal{A}$, we will denote by $\mathcal{A}^n$ the product category $\underbrace{\mathcal{A}\times \dots \times \mathcal{A}}_n$.
	
Generalizing Grothendieck's construction of the twisted inverse image functor leads us to the following definition:

\begin{dfn}\label{def:basic}
Let $A,B$ be two essentially finite type $\k$-algebras, and let $F:\mrm{D}(\opn{Mod} A)^n  \to \mrm{D}(\opn{Mod} B)^m$ be a functor. The twist of $F$ is the functor
\[
F^{!}(-) := D_B \circ F \circ D_A (-) :\mrm{D}(\opn{Mod} A)^n  \to \mrm{D}(\opn{Mod} B)^m.
\]
\end{dfn}

We now give several examples to demonstrate the usefulness of this definition.

\begin{exa}
Let $A,B$ be two essentially finite type $\k$-algebras, and let $f:A\to B$ be a $\k$-algebra map. Consider the functor $\mrm{L}f^{*} : \mrm{D}^{+}_{\mrm{f}}(\opn{Mod} A) \to \mrm{D}^{+}_{\mrm{f}}(\opn{Mod} B)$ given by $\mrm{L}f^{*} (-) := -\otimes^{\mrm{L}}_A B$. Then by \cite[Theorem 4.10]{YZ1}, for any $M \in \mrm{D}^{+}_{\mrm{f}}(\opn{Mod} A)$, there is an isomorphism of functors
\[
(\mrm{L}f^{*})^{!} (M) \cong f^{!}(M).
\]
\end{exa}

\begin{exa}
Let $A$ be an essentially finite type $\k$-algebra, and let $F(M,N) := M\otimes^{\mrm{L}}_A N$ and $G(M,N) := \mrm{R}\opn{Hom}_A(M,N)$.
Then by definition 
\[
F^{!}(M,N) := \mrm{R}\opn{Hom}_A( \mrm{R}\opn{Hom}_A(M,R_A) \otimes^{\mrm{L}}_A \mrm{R}\opn{Hom}_A(N,R_A) ,R_A),
\]
and
\[
G^{!}(M,N) := \mrm{R}\opn{Hom}_A(  \mrm{R}\opn{Hom}_A(\mrm{R}\opn{Hom}_A(M,R_A),\mrm{R}\opn{Hom}_A(N,R_A)) ,R_A).
\]
We set $M\otimes^{!}_A N:= F^{!}(M,N)$, and $\opn{Hom}^{!}_A(M,N) := G^{!}(M,N)$.
Note that if $M,N \in \mrm{D}^{+}_{\mrm{f}}(\opn{Mod} A)$, then $M\otimes^{!}_A N \in \mrm{D}^{+}_{\mrm{f}}(\opn{Mod} A)$. Similarly, if $M \in \mrm{D}^{+}_{\mrm{f}}(\opn{Mod} A)$ and $N \in \mrm{D}^{-}_{\mrm{f}}(\opn{Mod} A)$ then $\opn{Hom}^{!}_A(M,N) \in \mrm{D}^{-}_{\mrm{f}}(\opn{Mod} A)$.
In Theorem \ref{thm:hocharetwist} below, which follows almost immediately from the results of \cite{AILN}, we will identify these two functors.
\end{exa}

\begin{exa}
Let $A$ be an essentially finite type $\k$-algebra, and let $\a \subseteq A$ be an ideal. The $\a$-torsion and $\a$-completion functors are defined by $\Gamma_{\a} (-) := \varinjlim \opn{Hom}_A(A/\a^n,-)$ and $\Lambda_{\a} (-) := \varprojlim A/\a^n \otimes_A -$ respectively. Their derived functors $\mrm{R}\Gamma_{\a}$ and $\mrm{L}\Lambda_{\a}$ exist, and are calculated using K-injective and K-flat resolutions respectively (see \cite[Section 1]{AJL}). It follows from the Greenlees-May duality (specifically, from \cite[Corollary 5.2.2]{AJL}) that for any $M\in \mrm{D}_{\mrm{f}}(\opn{Mod} A)$, there is an isomorphism of functors $(\mrm{R}\Gamma_{\a})^{!}(M) \cong \mrm{L}\Lambda_{\a}(M)$, and for any $M\in \mrm{D}(\opn{Mod} A)$ such that $\mrm{R}\Gamma_{\a}(M) \in \mrm{D}_{\mrm{f}}(\opn{Mod} A)$, there is an isomorphism of functors $(\mrm{L}\Lambda_{\a})^{!}(M) \cong \mrm{R}\Gamma_{\a}(M)$. 
Conversely, once one knows that the functors $\mrm{R}\Gamma_{\a}$ and $\mrm{L}\Lambda_{\a}$ are twists of each other, the Greenlees-May duality
\[
\mrm{R}\opn{Hom}_A(\mrm{R}\Gamma_{\a}(M),N) \cong \mrm{R}\opn{Hom}_A(M,\mrm{L}\Lambda_{\a}(N))
\]
follows easily, so the language of Definition \ref{def:basic} is suited to describe this fundamental relation between these functors.
\end{exa}

\begin{thm}\label{thm:hocharetwist}
Let $A$ be an essentially finite type $\k$-algebra. 
\begin{enumerate}
\item For any $M \in \mrm{D}^{\mrm{b}}_{\mrm{f}}(\opn{Mod} A)$, and any $N \in \mrm{D}_{\mrm{f}}(\opn{Mod} A)$, there is an isomorphism of functors
\[
M\otimes^{!}_A N \cong \mrm{R}\opn{Hom}_{A\otimes^{\mrm{L}}_{\k} A}(A,M\otimes^{\mrm{L}}_{\k} N).
\]
\item  For any $M,N \in \mrm{D}^{\mrm{b}}_{\mrm{f}}(\opn{Mod} A)$, there is an isomorphism of functors
\[
\opn{Hom}^{!}_A(M,N) \cong A \otimes^{\mrm{L}}_{A\otimes^{\mrm{L}}_{\k} A} \mrm{R}\opn{Hom}_{\k}(M,N)
\]
\end{enumerate}
\end{thm}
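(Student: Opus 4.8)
The plan is to reduce both parts to the reduction formulas of \cite{AILN} by unwinding Definition \ref{def:basic} and then feeding in the standard formalism of a dualizing complex, the two key ingredients being the derived tensor-hom adjunction and the biduality isomorphism attached to $R_A$.

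For part (1), I would first unwind the definition to
\[
M\otimes^{!}_A N = \mrm{R}\opn{Hom}_A\bigl(D_A(M)\otimes^{\mrm{L}}_A D_A(N),R_A\bigr),
\]
and then apply derived tensor-hom adjunction so as to peel off the factor $D_A(M)$, obtaining $\mrm{R}\opn{Hom}_A\bigl(D_A(M),\mrm{R}\opn{Hom}_A(D_A(N),R_A)\bigr)=\mrm{R}\opn{Hom}_A\bigl(D_A(M),D_A^2(N)\bigr)$. Since $R_A$ is a dualizing complex it has finite injective dimension, so $D_A$ has bounded cohomological amplitude and the biduality morphism $N\to D_A^2(N)$ is an isomorphism for every $N\in\mrm{D}_{\mrm{f}}(\opn{Mod} A)$, including the unbounded ones; this lets me replace $D_A^2(N)$ by $N$ and land on $\mrm{R}\opn{Hom}_A(D_A(M),N)$. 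Finally I would invoke the reduction formula \eqref{eqn:red-hoch}, that is \cite[Theorem 4.1]{AILN}, applied with the bounded complex $M$ in its first slot, to identify $\mrm{R}\opn{Hom}_A(D_A(M),N)$ with $\mrm{R}\opn{Hom}_{A\otimes^{\mrm{L}}_{\k} A}(A,M\otimes^{\mrm{L}}_{\k} N)$.

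The deliberate choice of which of the two tensor-hom adjunctions to use is exactly what makes the asymmetric hypotheses ($M\in\mrm{D}^{\mrm{b}}_{\mrm{f}}$ but $N$ only in $\mrm{D}_{\mrm{f}}$) fit: biduality is applied only to $N$, while the finiteness demanded by \eqref{eqn:red-hoch} falls on $M$. I expect the main point to get right to be precisely that biduality holds on all of $\mrm{D}_{\mrm{f}}(\opn{Mod} A)$ and not merely on $\mrm{D}^{\mrm{b}}_{\mrm{f}}$, together with checking that the relevant slot of \cite[Theorem 4.1]{AILN} indeed carries its finiteness hypothesis on $M$; everything else is formal.

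For part (2) I would unwind $\opn{Hom}^{!}_A(M,N)=D_A\bigl(\mrm{R}\opn{Hom}_A(D_A(M),D_A(N))\bigr)$ and first simplify the inner term. Combining tensor-hom adjunction with the biduality isomorphism $D_A^2(M)\cong M$ (harmless here, as both $M,N\in\mrm{D}^{\mrm{b}}_{\mrm{f}}$) yields the swap isomorphism $\mrm{R}\opn{Hom}_A(D_A(M),D_A(N))\cong\mrm{R}\opn{Hom}_A(N,M)$, which is just the statement that $D_A$ is a contravariant equivalence on $\mrm{D}^{\mrm{b}}_{\mrm{f}}(\opn{Mod} A)$. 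Hence $\opn{Hom}^{!}_A(M,N)\cong D_A\bigl(\mrm{R}\opn{Hom}_A(N,M)\bigr)$, and it remains to match this with the derived Hochschild homology reduction formula \cite[Theorem 4.6]{AILN}, which identifies $A\otimes^{\mrm{L}}_{A\otimes^{\mrm{L}}_{\k} A}\mrm{R}\opn{Hom}_{\k}(M,N)$ with $D_A\bigl(\mrm{R}\opn{Hom}_A(N,M)\bigr)$; a variance check (contravariant in $M$, covariant in $N$ on both sides) pins down the order of the arguments. The main obstacle here is bookkeeping: getting the exact shape of \cite[Theorem 4.6]{AILN} and the ordering of $M$ and $N$ to line up with the swap isomorphism, since $\mrm{R}\opn{Hom}_{\k}(M,N)$ is not symmetric in its two arguments.
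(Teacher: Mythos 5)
Your proposal is correct and follows essentially the same route as the paper: both parts reduce to \cite[Theorems 4.1 and 4.6]{AILN} via the derived hom-tensor adjunction and the biduality isomorphism for $R_A$, with biduality applied only to $N$ in part (1) exactly as you arrange, the only difference being that you traverse the chain of isomorphisms in the opposite direction. The one bookkeeping point you flag in part (2) is real but harmless: \cite[Theorem 4.6]{AILN} literally gives $D_A(M)\otimes^{\mrm{L}}_A N$ rather than $D_A(\mrm{R}\opn{Hom}_A(N,M))$, and identifying the two costs one more application of adjunction plus biduality, which is precisely what the paper does.
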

\begin{proof}
For the first claim, by \cite[Theorem 4.1]{AILN}, there is an isomorphism of functors
\[
\mrm{R}\opn{Hom}_{A\otimes^{\mrm{L}}_{\k} A}(A,M\otimes^{\mrm{L}}_{\k} N) \cong \mrm{R}\opn{Hom}_A(D_A(M),N).
\]
Since $N \in \mrm{D}_{\mrm{f}}(\opn{Mod} A)$, we have that $N \cong D_A(D_A(N))$. Hence, by the derived hom-tensor adjunction
\begin{eqnarray}
\mrm{R}\opn{Hom}_A(D_A(M),N) \cong \mrm{R}\opn{Hom}_A(D_A(M),\mrm{R}\opn{Hom}_A(D_A(N),R_A)) \cong\nonumber\\
\mrm{R}\opn{Hom}_A(D_A(M) \otimes^{\mrm{L}}_A D_A(N) ,R_A),\nonumber
\end{eqnarray}
which proves the result.
To show the second claim, by \cite[Theorem 4.6]{AILN}, there is an isomorphism of functors
\[
A \otimes^{\mrm{L}}_{A\otimes^{\mrm{L}}_{\k} A} \mrm{R}\opn{Hom}_{\k}(M,N) \cong D_A(M) \otimes^{\mrm{L}}_A N.
\]
Since $D_A(M) \otimes^{\mrm{L}}_A N \in \mrm{D}_{\mrm{f}}(\opn{Mod} A)$, we have that 
\[
D_A(M) \otimes^{\mrm{L}}_A N \cong D_A( \mrm{R}\opn{Hom}_A( D_A(M) \otimes^{\mrm{L}}_A N, R_A) ).
\]
So by the derived hom-tensor adjunction
\[
D_A( \mrm{R}\opn{Hom}_A( D_A(M) \otimes^{\mrm{L}}_A N, R_A) ) \cong
D_A( \mrm{R}\opn{Hom}_A( D_A(M) , D_A(N) ) ) = \opn{Hom}^{!}_A(M,N).
\]
\end{proof}

The main reason we introduced the twisting formalism is that relations between functors are preserved between their twists:

\begin{prop}\label{keyprop}
Let $A,B,C$ be three essentially finite type $\k$-algebras. Let $F:\mrm{D}(\opn{Mod}A)^m \to \mrm{D}(\opn{Mod} C)^k$, $G:\mrm{D}(\opn{Mod} B)^n \to \mrm{D}(\opn{Mod} C)^k$ and $H:\mrm{D}(\opn{Mod} A)^m \to \mrm{D}(\opn{Mod} B)^n$ be three functors such that there is an isomorphism of functors $F \cong G\circ H$. Then there is an isomorphism of functors $F^{!}(M_1,\dots,M_m) \cong G^{!} \circ H^{!}(M_1,\dots,M_m)$ for any $M_1,\dots,M_m \in D(\opn{Mod} A)$ such that $H \circ D_A(M_1,\dots,M_m) \in D_{\mrm{f}}(\opn{Mod} B)^n$.
\end{prop}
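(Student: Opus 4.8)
The plan is to unwind the definition of the twist on both sides and reduce the entire statement to the biduality property of the dualizing complex $R_B$. Writing $M = (M_1,\dots,M_m)$ for brevity, Definition \ref{def:basic} gives $F^{!}(M) = D_C(F(D_A(M)))$, where $D_A(M) = (D_A(M_1),\dots,D_A(M_m))$ in the componentwise sense fixed earlier. Using the hypothesis $F \cong G \circ H$ and the fact that $D_C$ is a functor (hence carries isomorphisms of functors to isomorphisms of functors), I would first record the natural isomorphism $F^{!}(M) \cong D_C(G(H(D_A(M))))$.

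Next I would expand the right-hand side by applying the twists in order. By definition $H^{!}(M) = D_B(H(D_A(M)))$, and then $G^{!}(H^{!}(M)) = D_C(G(D_B(H^{!}(M)))) = D_C(G(D_B(D_B(H(D_A(M))))))$. Comparing with the expression obtained for $F^{!}(M)$, the whole claim reduces to comparing the inner objects $H(D_A(M))$ and $D_B(D_B(H(D_A(M))))$; everything outside is the common functor $D_C \circ G$.

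The key input, and the only place where the finiteness hypothesis is used, is the biduality property of dualizing complexes: for any $N \in \mrm{D}_{\mrm{f}}(\opn{Mod} B)$ the canonical evaluation morphism $N \to D_B(D_B(N))$ is an isomorphism, and it is natural in $N$. This is exactly the fact already invoked in the proof of Theorem \ref{thm:hocharetwist}. The hypothesis that $H \circ D_A(M_1,\dots,M_m) \in \mrm{D}_{\mrm{f}}(\opn{Mod} B)^n$ guarantees that each component of $N := H(D_A(M))$ lies in $\mrm{D}_{\mrm{f}}(\opn{Mod} B)$, so applying biduality componentwise yields a natural isomorphism $H(D_A(M)) \cong D_B(D_B(H(D_A(M))))$.

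Finally, I would apply the functor $G$ and then $D_C$ to this isomorphism, both of which preserve isomorphisms and their naturality, obtaining the desired natural isomorphism $F^{!}(M) \cong G^{!}\circ H^{!}(M)$ on the full subcategory of tuples satisfying the stated condition. The argument is thus entirely formal apart from the biduality step; I expect the main subtlety to be simply verifying that the finiteness condition is precisely what is needed so that $N \to D_B(D_B(N))$ is invertible on the relevant objects, since without it this canonical map need not be an isomorphism and the two composites would genuinely differ.
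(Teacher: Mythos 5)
Your proof is correct and follows essentially the same route as the paper: unwind both twists, observe that the two sides differ only by the insertion of $D_B \circ D_B$ around $H \circ D_A(M)$, and use the biduality isomorphism $N \iso D_B(D_B(N))$ for $N \in \mrm{D}_{\mrm{f}}(\opn{Mod} B)$, which is exactly where the finiteness hypothesis enters. No gaps.
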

\begin{proof}
Since $F \cong G\circ H$, it follows that $F^{!} \cong (G\circ H)^! := D_C \circ G\circ H \circ D_A$. On the other hand, by definition 
\[
G^{!}\circ H^{!} := D_C \circ G \circ D_B \circ D_B \circ H \circ D_A.
\]
By assumption, $H \circ D_A$ has finitely generated cohomology. Hence,
$D_B \circ D_B \circ H \circ D_A \cong H\circ D_A$ which proves the result.
\end{proof}

From this proposition, the following relations between twisted functors follow immediately:

\begin{cor}\label{cor:rel}
Let $A$ be an essentially finite type $\k$-algebra. Then the following holds:
\begin{enumerate}
\item Let $B$ be another essentially finite type $\k$-algebra, and let $f:A\to B$ be a $\k$-algebra map. For any $M,N \in \mrm{D}^{+}_{\mrm{f}}(\opn{Mod} A)$ there is a bifunctorial isomorphism
\[
f^{!}(M\otimes^{!}_A N) \cong (f^{!}(M))\otimes^{!}_B (f^{!}(N)).
\]
in $\mrm{D}(\opn{Mod} B)$.
\item For any $M,N,K \in \mrm{D}^{+}_{\mrm{f}}(\opn{Mod} A)$, there is a trifunctorial isomorphism
\[
M\otimes^{!}_A (N \otimes^{!}_A K) \cong (M\otimes^{!}_A N) \otimes^{!}_A K 
\]
in $\mrm{D}(\opn{Mod} A)$.
\item  For any $M\in \mrm{D}^{+}_{\mrm{f}}(\opn{Mod} A)$, 
$N \in \mrm{D}^{\mrm{b}}_{\mrm{f}}(\opn{Mod} A)$ and $K \in \mrm{D}^{-}_{\mrm{f}}(\opn{Mod} A)$, there is a trifunctorial isomorphism
\[
\opn{Hom}^{!}_A(M\otimes^{!}_A N,K) \cong \opn{Hom}^{!}_A(M,\opn{Hom}^{!}_A(N,K))
\]
in $\mrm{D}(\opn{Mod} A)$.
\end{enumerate}
\end{cor}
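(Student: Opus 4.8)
The plan is to deduce each of the three isomorphisms from Proposition \ref{keyprop} by starting from a standard isomorphism between the corresponding \emph{untwisted} functors and distributing the twist over the relevant composition. The recipe is uniform: realize each side as the twist of a composite $G \circ H$, invoke Proposition \ref{keyprop} to replace $(G\circ H)^!$ by $G^! \circ H^!$, and identify the pieces using Definition \ref{def:basic}, the definitional identities $(-\otimes^{\mrm{L}}_A -)^! = -\otimes^!_A -$ and $\mrm{R}\opn{Hom}_A(-,-)^! = \opn{Hom}^!_A(-,-)$, the identification $(\mrm{L}f^*)^! \cong f^!$ recorded in the examples above, and the reflexivity $D_A \circ D_A \cong \mrm{id}$ on $\mrm{D}_{\mrm{f}}(\opn{Mod} A)$. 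Throughout I use that twisting preserves isomorphisms of functors, since $D_B \circ (-) \circ D_A$ carries a natural isomorphism to a natural isomorphism.

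For (1) I would begin with the base-change isomorphism $\mrm{L}f^*(M \otimes^{\mrm{L}}_A N) \cong \mrm{L}f^*(M) \otimes^{\mrm{L}}_B \mrm{L}f^*(N)$. Writing the left-hand side as $\mrm{L}f^* \circ (-\otimes^{\mrm{L}}_A -)$, Proposition \ref{keyprop} identifies its twist with $(\mrm{L}f^*)^! \circ (-\otimes^{\mrm{L}}_A -)^! = f^! \circ (-\otimes^!_A-)$, that is, $f^!(M \otimes^!_A N)$; writing the right-hand side as $(-\otimes^{\mrm{L}}_B -) \circ (\mrm{L}f^*,\mrm{L}f^*)$, its twist is $(-\otimes^!_B-) \circ (f^!,f^!)$, that is, $f^!(M) \otimes^!_B f^!(N)$. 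Since the two untwisted functors are isomorphic, so are their twists.

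For (2) I would twist the associativity isomorphism $M \otimes^{\mrm{L}}_A (N \otimes^{\mrm{L}}_A K) \cong (M \otimes^{\mrm{L}}_A N) \otimes^{\mrm{L}}_A K$, and for (3) the adjoint-associativity isomorphism $\mrm{R}\opn{Hom}_A(M \otimes^{\mrm{L}}_A N, K) \cong \mrm{R}\opn{Hom}_A(M, \mrm{R}\opn{Hom}_A(N,K))$. In each case the outer functor is $-\otimes^{\mrm{L}}_A-$ or $\mrm{R}\opn{Hom}_A(-,-)$, while the inner functor $H \colon \mrm{D}(\opn{Mod} A)^3 \to \mrm{D}(\opn{Mod} A)^2$ pairs two of the three slots via $\otimes^{\mrm{L}}_A$ or $\mrm{R}\opn{Hom}_A$ and leaves the third slot alone. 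Computing $H^!$ slotwise, the untouched slot contributes $D_A D_A \cong \mrm{id}$ and the paired slots contribute $\otimes^!_A$ or $\opn{Hom}^!_A$; applying Proposition \ref{keyprop} then converts each side directly into the desired nested expression, and the two sides agree because the underlying untwisted functors do.

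The one genuine verification, rather than a formality, is the finiteness hypothesis of Proposition \ref{keyprop}: one must check that $H \circ D_A$ lands in $\mrm{D}_{\mrm{f}}$ on the stated classes of objects. This reduces to standard facts — that $D_A$ interchanges $\mrm{D}^+_{\mrm{f}}$ with $\mrm{D}^-_{\mrm{f}}$ and preserves $\mrm{D}^{\mrm{b}}_{\mrm{f}}$, that $-\otimes^{\mrm{L}}_A-$ and $\mrm{L}f^*$ preserve bounded-above finite cohomology, and that $\mrm{R}\opn{Hom}_A(-,-)$ sends a bounded finite first argument together with a bounded-below finite second argument into $\mrm{D}^+_{\mrm{f}}$ — and tracking these boundedness conditions through the operations is exactly what forces the differing $\mrm{D}^+_{\mrm{f}}$, $\mrm{D}^{\mrm{b}}_{\mrm{f}}$, $\mrm{D}^-_{\mrm{f}}$ hypotheses on $M$, $N$, $K$ in the three items. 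I expect this boundedness bookkeeping to be the only real obstacle, the categorical skeleton being immediate from Proposition \ref{keyprop}.
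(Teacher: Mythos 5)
Your proposal is correct and follows the same route as the paper: each item is obtained by applying Proposition \ref{keyprop} to the corresponding canonical isomorphism between the untwisted functors (base change for derived tensor, associativity, and hom-tensor adjunction), with the twists of the pieces identified via Definition \ref{def:basic} and biduality. Your explicit attention to the finiteness hypothesis of Proposition \ref{keyprop} and to the slotwise computation of $H^!$ is more detailed than the paper's one-line proof, but the underlying argument is identical.
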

\begin{proof}
Each of these statements follows from applying Proposition \ref{keyprop} to the following canonical isomorphisms:
\begin{enumerate}
\item $(M\otimes^{\mrm{L}}_A N) \otimes^{\mrm{L}}_A B  \cong (M\otimes^{\mrm{L}}_A  B) \otimes^{\mrm{L}}_B (N \otimes^{\mrm{L}}_A B)$.
\item $M \otimes^{\mrm{L}}_A (N \otimes^{\mrm{L}}_A K) \cong (M \otimes^{\mrm{L}}_A  N) \otimes^{\mrm{L}}_A  K$.
\item $\mrm{R}\opn{Hom}_A(M \otimes^{\mrm{L}}_A N,K) \cong \mrm{R}\opn{Hom}_A(M,\mrm{R}\opn{Hom}_A(N,K))$.
\end{enumerate}
\end{proof}

\begin{rem}\label{rem:monoidal}
It follows easily from this corollary that the operation $-\otimes^{!}_A-$ defines a symmetric monoidal structure on $\mrm{D}^{+}_{\mrm{f}}(\opn{Mod} A)$ for any essentially finite type $\k$-algebra $A$. The canonical dualizing complex $R_A$ is a monoidal unit. If $B$ is another essentially finite type $\k$-algebra, and $f:A\to B$ is a $\k$-algebra map, then the twisted inverse image functor $f^{!}:\mrm{D}^{+}_{\mrm{f}}(\opn{Mod} A) \to \mrm{D}^{+}_{\mrm{f}}(\opn{Mod} B)$ is a monoidal functor. Further, restricting attention to the subcategory of $\mrm{D}^{+}_{\mrm{f}}(\opn{Mod} A)$ made of complexes with finitely generated cohomology and of finite injective dimension over $A$, one obtain a closed symmetric monoidal category, with the internal hom being $\opn{Hom}^{!}_A(-,-)$. 
\end{rem}

\begin{rem}
We first encountered the idea that Hochschild cohomology defines a symmetric monoidal structure in \cite{Ga}.
There, in \cite[Corollary 5.6.8]{Ga}, assuming $\k$ is a field of characteristic zero, it was stated without proof that the operation $\mrm{R}\opn{Hom}_{A\otimes_{\k} A}(A,-\otimes_{\k} -)$ defines a symmetric monoidal structure on the category of indcoherent sheaves on $\opn{Spec} A$.
\end{rem}

Combining Theorem \ref{thm:hocharetwist} and Corollary \ref{cor:rel} we immediately obtain various relations between the derived Hochschild functors and the twisted inverse image functor.

\begin{cor}\label{cor:commute}
\textbf{Derived Hochschild cohomology commutes with the twisted inverse image functor:}
Let $\k$ be a regular noetherian ring of finite Krull dimension, and let $A,B$ be two essentially finite type $\k$-algebras. Let $f:A\to B$ be a $\k$-algebra map. Let $M,N \in \mrm{D}^{\mrm{b}}_{\mrm{f}}(\opn{Mod} A)$ and assume that the complexes $f^{!}(M), f^{!}(N)$ have bounded cohomology.
Then there is a bifunctorial isomorphism
\[
f^{!} \mrm{R}\opn{Hom}_{A\otimes^{\mrm{L}}_{\k} A}(A,M\otimes^{\mrm{L}}_{\k} N) \cong 
\mrm{R}\opn{Hom}_{B\otimes^{\mrm{L}}_{\k} B}(B,f^{!}(M)\otimes^{\mrm{L}}_{\k} f^{!}(N))
\]
in $\mrm{D}(\opn{Mod} B)$.
\end{cor}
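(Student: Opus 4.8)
The plan is to recognize both sides of the asserted isomorphism as instances of the twisted tensor bifunctor $-\otimes^{!}-$ via Theorem \ref{thm:hocharetwist}(1), thereby reducing the statement to the already-established Corollary \ref{cor:rel}(1). The proof will thus be a concatenation of three canonical isomorphisms, and the only real work is bookkeeping the finiteness and boundedness hypotheses.

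First I would apply Theorem \ref{thm:hocharetwist}(1) over the algebra $A$. Since $M \in \mrm{D}^{\mrm{b}}_{\mrm{f}}(\opn{Mod} A)$ and $N \in \mrm{D}^{\mrm{b}}_{\mrm{f}}(\opn{Mod} A) \subseteq \mrm{D}_{\mrm{f}}(\opn{Mod} A)$, the hypotheses of that theorem are met, giving a bifunctorial isomorphism
\[
\mrm{R}\opn{Hom}_{A\otimes^{\mrm{L}}_{\k} A}(A,M\otimes^{\mrm{L}}_{\k} N) \cong M\otimes^{!}_A N.
\]
Applying $f^{!}$ identifies the left-hand side of the corollary with $f^{!}(M\otimes^{!}_A N)$. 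Next, since $M,N \in \mrm{D}^{+}_{\mrm{f}}(\opn{Mod} A)$, Corollary \ref{cor:rel}(1) yields $f^{!}(M\otimes^{!}_A N) \cong f^{!}(M)\otimes^{!}_B f^{!}(N)$. Finally I would invoke Theorem \ref{thm:hocharetwist}(1) once more, this time over $B$, applied to $f^{!}(M)$ and $f^{!}(N)$, to rewrite $f^{!}(M)\otimes^{!}_B f^{!}(N)$ as
\[
\mrm{R}\opn{Hom}_{B\otimes^{\mrm{L}}_{\k} B}(B,f^{!}(M)\otimes^{\mrm{L}}_{\k} f^{!}(N)),
\]
which is the right-hand side of the corollary. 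Chaining the three isomorphisms proves the claim.

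The one point requiring care — and the reason the statement carries its extra assumption — is the verification that the $B$-side application of Theorem \ref{thm:hocharetwist}(1) is legitimate: that theorem demands that its first argument lie in $\mrm{D}^{\mrm{b}}_{\mrm{f}}(\opn{Mod} B)$. Since $f^{!}$ preserves $\mrm{D}^{+}_{\mrm{f}}$, we have $f^{!}(M), f^{!}(N) \in \mrm{D}^{+}_{\mrm{f}}(\opn{Mod} B)$; combining this with the hypothesis that $f^{!}(M)$ and $f^{!}(N)$ have bounded cohomology places both complexes in $\mrm{D}^{\mrm{b}}_{\mrm{f}}(\opn{Mod} B)$, so that $f^{!}(M)$ is an admissible first argument and $f^{!}(N)$ an admissible second argument. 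I expect no genuine obstacle beyond this hypothesis-checking, since all the analytic content has been absorbed into Theorem \ref{thm:hocharetwist} and Corollary \ref{cor:rel}.
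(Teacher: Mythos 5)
Your proposal is correct and matches the paper's intended argument: the paper states that Corollary \ref{cor:commute} follows immediately by combining Theorem \ref{thm:hocharetwist} and Corollary \ref{cor:rel}, and your three-step chain (identify the left side as $f^{!}(M\otimes^{!}_A N)$, apply Corollary \ref{cor:rel}(1), then re-identify over $B$) is exactly that combination, with the hypothesis-checking for the $B$-side application correctly explaining the role of the boundedness assumption on $f^{!}(M)$ and $f^{!}(N)$.
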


\begin{rem}
If in the above corollary the map $f:A\to B$ has a finite flat dimension, then by \cite[Proposition 2.5.4]{AIL2}, assuming that $M,N$ have a bounded cohomology implies that $f^{!}(M),f^{!}(N)$ have bounded cohomology.
\end{rem}

\begin{cor}
\textbf{Adjunction between derived Hochschild homology and derived Hochschild cohomology:}
Let $\k$ be a regular noetherian ring of finite Krull dimension, and let $A$ be an essentially finite type $\k$-algebra. Let $M,N,K \in \mrm{D}^{\mrm{b}}_{\mrm{f}}(\opn{Mod} A)$ be three complexes, and assume that the complexes $M\otimes^{!}_A N, \opn{Hom}^{!}_A(N,K)$ are also bounded. Then there is a trifunctorial isomorphism
\begin{eqnarray}
\nonumber
A \otimes^{\mrm{L}}_{A\otimes^{\mrm{L}}_{\k} A} \mrm{R}\opn{Hom}_{\k} ( \mrm{R}\opn{Hom}_{A\otimes^{\mrm{L}}_{\k} A}(A,M\otimes^{\mrm{L}}_{\k} N) ,K ) \cong\\
\nonumber 
A \otimes^{\mrm{L}}_{A\otimes^{\mrm{L}}_{\k} A} \mrm{R}\opn{Hom}_{\k} (M ,A \otimes^{\mrm{L}}_{A\otimes^{\mrm{L}}_{\k} A} \mrm{R}\opn{Hom}_{\k} (N,K)  ).
\end{eqnarray}
in $\mrm{D}(\opn{Mod} A)$.
\end{cor}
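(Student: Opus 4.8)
The plan is to recognize that the displayed isomorphism is nothing more than the associativity relation of Corollary~\ref{cor:rel}(3), rewritten through the dictionary supplied by Theorem~\ref{thm:hocharetwist}. Concretely, Theorem~\ref{thm:hocharetwist}(1) identifies the derived Hochschild cohomology $\mrm{R}\opn{Hom}_{A\otimes^{\mrm{L}}_{\k} A}(A,-\otimes^{\mrm{L}}_{\k} -)$ with the twisted tensor product $-\otimes^{!}_A-$, while Theorem~\ref{thm:hocharetwist}(2) identifies the derived Hochschild homology $A \otimes^{\mrm{L}}_{A\otimes^{\mrm{L}}_{\k} A} \mrm{R}\opn{Hom}_{\k}(-,-)$ with the twisted hom $\opn{Hom}^{!}_A(-,-)$. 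So I would first translate both sides into these twisted functors, and then appeal to the already-established relation.

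For the left-hand side, I would begin with the inner expression. Since $M \in \mrm{D}^{\mrm{b}}_{\mrm{f}}(\opn{Mod} A)$ and $N \in \mrm{D}^{\mrm{b}}_{\mrm{f}}(\opn{Mod} A) \subseteq \mrm{D}_{\mrm{f}}(\opn{Mod} A)$, Theorem~\ref{thm:hocharetwist}(1) gives $\mrm{R}\opn{Hom}_{A\otimes^{\mrm{L}}_{\k} A}(A,M\otimes^{\mrm{L}}_{\k} N) \cong M\otimes^{!}_A N$. By the Example defining $\otimes^{!}_A$ and $\opn{Hom}^{!}_A$, the complex $M\otimes^{!}_A N$ lies in $\mrm{D}^{+}_{\mrm{f}}(\opn{Mod} A)$, and by hypothesis it is bounded, hence lies in $\mrm{D}^{\mrm{b}}_{\mrm{f}}(\opn{Mod} A)$; together with $K \in \mrm{D}^{\mrm{b}}_{\mrm{f}}(\opn{Mod} A)$ this lets me apply Theorem~\ref{thm:hocharetwist}(2) to the outer homology, yielding that the left-hand side is isomorphic to $\opn{Hom}^{!}_A(M\otimes^{!}_A N,K)$. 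Symmetrically, for the right-hand side I would apply Theorem~\ref{thm:hocharetwist}(2) to the inner expression to obtain $\opn{Hom}^{!}_A(N,K)$; since $N,K \in \mrm{D}^{\mrm{b}}_{\mrm{f}}(\opn{Mod} A)$, the same Example places this complex in $\mrm{D}^{-}_{\mrm{f}}(\opn{Mod} A)$, and by hypothesis it is bounded, hence in $\mrm{D}^{\mrm{b}}_{\mrm{f}}(\opn{Mod} A)$. A second application of Theorem~\ref{thm:hocharetwist}(2), with $M \in \mrm{D}^{\mrm{b}}_{\mrm{f}}(\opn{Mod} A)$, then identifies the right-hand side with $\opn{Hom}^{!}_A(M,\opn{Hom}^{!}_A(N,K))$.

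Having made these identifications, the asserted isomorphism becomes exactly $\opn{Hom}^{!}_A(M\otimes^{!}_A N,K) \cong \opn{Hom}^{!}_A(M,\opn{Hom}^{!}_A(N,K))$, which is Corollary~\ref{cor:rel}(3) applied to $M,N,K \in \mrm{D}^{\mrm{b}}_{\mrm{f}}(\opn{Mod} A)$ (these membership conditions are stronger than the $\mrm{D}^{+}_{\mrm{f}}$, $\mrm{D}^{\mrm{b}}_{\mrm{f}}$, $\mrm{D}^{-}_{\mrm{f}}$ hypotheses required there). Since every isomorphism in Theorem~\ref{thm:hocharetwist} is functorial and Corollary~\ref{cor:rel}(3) is trifunctorial, the resulting isomorphism is trifunctorial as claimed. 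I do not expect any genuine obstacle: the content lies entirely in the previously proved statements, and the only point requiring care is the bookkeeping of boundedness, namely verifying that the two given hypotheses (that $M\otimes^{!}_A N$ and $\opn{Hom}^{!}_A(N,K)$ are bounded) are precisely what upgrade the one-sided conclusions of the Example to the $\mrm{D}^{\mrm{b}}_{\mrm{f}}$ membership needed to invoke Theorem~\ref{thm:hocharetwist}(2) at each step.
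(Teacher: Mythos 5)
Your proposal is correct and is exactly the argument the paper intends: the paper states this corollary without a separate proof, as an immediate consequence of combining Theorem \ref{thm:hocharetwist} with Corollary \ref{cor:rel}(3), and your translation of both sides into $\opn{Hom}^{!}_A(M\otimes^{!}_A N,K) \cong \opn{Hom}^{!}_A(M,\opn{Hom}^{!}_A(N,K))$ together with the boundedness bookkeeping is precisely that argument spelled out.
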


\begin{cor}\label{cor:ass}
\textbf{Associativity of derived Hochschild cohomology:} 
Let $\k$ be a regular noetherian ring of finite Krull dimension, and let $A$ be an essentially finite type $\k$-algebra. Let $M,N,K \in \mrm{D}^{\mrm{b}}_{\mrm{f}}(\opn{Mod} A)$ be three complexes, and assume that the complexes $M\otimes^{!}_A N$ and $N\otimes^{!}_A K$ are also bounded. Then there are trifunctorial isomorphisms
\begin{eqnarray}
\nonumber
\mrm{R}\opn{Hom}_{A\otimes^{\mrm{L}}_{\k} A}(A,M\otimes^{\mrm{L}}_{\k} \mrm{R}\opn{Hom}_{A\otimes^{\mrm{L}}_{\k} A}(A,N\otimes^{\mrm{L}}_{\k} K)) \cong\\
\nonumber
\mrm{R}\opn{Hom}_{A\otimes^{\mrm{L}}_{\k}  A}(A,\mrm{R}\opn{Hom}_{A\otimes^{\mrm{L}}_{\k} A}(A,M\otimes^{\mrm{L}}_{\k} N) \otimes^{\mrm{L}}_{\k}  K) \cong\\ 
\nonumber
\mrm{R}\opn{Hom}_A ( \mrm{R}\opn{Hom}_A(M,R_A) \otimes^{\mrm{L}}_A \mrm{R}\opn{Hom}_A(N,R_A) \otimes^{\mrm{L}}_A
\mrm{R}\opn{Hom}_A(K,R_A) ,R_A)
\end{eqnarray}
in $\mrm{D}(\opn{Mod} A)$.
\end{cor}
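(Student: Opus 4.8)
The plan is to reduce both of the outer complexes to the twisted tensor bifunctor $-\otimes^{!}_A-$ and then invoke the associativity already recorded in Corollary \ref{cor:rel}(2). The essential input is Theorem \ref{thm:hocharetwist}(1), which identifies the derived Hochschild cohomology $\mrm{R}\opn{Hom}_{A\otimes^{\mrm{L}}_{\k} A}(A,-\otimes^{\mrm{L}}_{\k}-)$ with $-\otimes^{!}_A-$. First I would apply this to the innermost term $\mrm{R}\opn{Hom}_{A\otimes^{\mrm{L}}_{\k} A}(A,N\otimes^{\mrm{L}}_{\k} K)$, which is $N\otimes^{!}_A K$; then, applying the reduction formula once more to $M\otimes^{\mrm{L}}_{\k}(N\otimes^{!}_A K)$, the first displayed complex is identified with $M\otimes^{!}_A(N\otimes^{!}_A K)$. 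By the same two-step argument the second displayed complex is identified with $(M\otimes^{!}_A N)\otimes^{!}_A K$. With these two identifications in place, the isomorphism between the first two complexes is precisely the associativity isomorphism of Corollary \ref{cor:rel}(2).

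It then remains to match the third complex. Here I would unwind the definition of the iterated twist: by Definition \ref{def:basic} applied to $F(M,N)=M\otimes^{\mrm{L}}_A N$, one has $M\otimes^{!}_A N = D_A\bigl(D_A(M)\otimes^{\mrm{L}}_A D_A(N)\bigr)$. Since $D_A(M)\otimes^{\mrm{L}}_A D_A(N)$ lies in $\mrm{D}_{\mrm{f}}(\opn{Mod} A)$, the duality $D_A\circ D_A\cong\mrm{id}$ gives $D_A(M\otimes^{!}_A N)\cong D_A(M)\otimes^{\mrm{L}}_A D_A(N)$. Substituting this into $(M\otimes^{!}_A N)\otimes^{!}_A K = D_A\bigl(D_A(M\otimes^{!}_A N)\otimes^{\mrm{L}}_A D_A(K)\bigr)$ and using associativity of $-\otimes^{\mrm{L}}_A-$ produces exactly the third complex $\mrm{R}\opn{Hom}_A\bigl(D_A(M)\otimes^{\mrm{L}}_A D_A(N)\otimes^{\mrm{L}}_A D_A(K),R_A\bigr)$, completing the chain.

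The main obstacle is the bookkeeping of finiteness and boundedness hypotheses at each juncture: the reduction formula of Theorem \ref{thm:hocharetwist}(1) requires its first argument to lie in $\mrm{D}^{\mrm{b}}_{\mrm{f}}(\opn{Mod} A)$, the associativity of Corollary \ref{cor:rel}(2) is stated for complexes in $\mrm{D}^{+}_{\mrm{f}}(\opn{Mod} A)$, and the double-duality $D_A\circ D_A\cong\mrm{id}$ needs finitely generated cohomology. The hypotheses that $M,N,K\in\mrm{D}^{\mrm{b}}_{\mrm{f}}(\opn{Mod} A)$ and that $M\otimes^{!}_A N$ and $N\otimes^{!}_A K$ are bounded are imposed precisely so that the intermediate complexes $N\otimes^{!}_A K$ and $M\otimes^{!}_A N$ sit in $\mrm{D}^{\mrm{b}}_{\mrm{f}}(\opn{Mod} A)$, which is exactly the range where the reduction formula may be reapplied in its first slot and where the associativity isomorphism is available. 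Verifying that each application of Theorem \ref{thm:hocharetwist}(1) and of the duality has its arguments in the correct subcategory is the only point requiring care; once that is checked, all the isomorphisms are formal consequences of the tools established earlier.
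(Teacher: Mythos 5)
Your proposal is correct and follows essentially the same route as the paper: identify both Hochschild expressions with the iterated twists $M\otimes^{!}_A(N\otimes^{!}_A K)$ and $(M\otimes^{!}_A N)\otimes^{!}_A K$ via Theorem \ref{thm:hocharetwist}, invoke the associativity of Corollary \ref{cor:rel}(2), and then unwind the definition of the iterated twist to reach the third complex. The only cosmetic difference is that you collapse $D_A(M\otimes^{!}_A N)$ using biduality $D_A\circ D_A\cong\mathrm{id}$ where the paper cites the derived hom-tensor adjunction, but these amount to the same formal manipulation here, and your bookkeeping of the boundedness hypotheses matches the paper's intent.
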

\begin{proof}
The first isomorphism follows from Theorem \ref{thm:hocharetwist}
and Corollary \ref{cor:rel}. To get the second isomorphism, first replace
$\mrm{R}\opn{Hom}_{A\otimes^{\mrm{L}}_{\k}  A}(A,\mrm{R}\opn{Hom}_{A\otimes^{\mrm{L}}_{\k} A}(A,M\otimes^{\mrm{L}}_{\k} N) \otimes^{\mrm{L}}_{\k}  K)$ with $(M\otimes^{!}_A N)\otimes^{!}_A K$, and now use the derived hom-tensor adjunction.
\end{proof}

The second isomorphism in the above Corollary can be thought of as a reduction formula for derived 3-Hochschild cohomology. One might wonder if this functor is canonically isomorphic to $\mrm{R}\opn{Hom}_{A\otimes^{\mrm{L}}_{\k} A\otimes^{\mrm{L}}_{\k} A}(A,M\otimes^{\mrm{L}}_{\k} N \otimes^{\mrm{L}}_{\k} K)$. The answer to this is positive, and in fact, more generally, for every $n>1$, there is an isomorphism of functors
\[
\mrm{R}\opn{Hom}_{\underbrace{A\otimes^{\mrm{L}}_{\k} A \otimes^{\mrm{L}}_{\k} \dots \otimes^{\mrm{L}}_{\k} A}_n}(A,M_1 \otimes^{\mrm{L}}_{\k} M_2 \otimes^{\mrm{L}}_{\k} \dots \otimes^{\mrm{L}}_{\k} M_n) \cong M_1 \otimes^!_A \dots \otimes^!_A M_n
\]
For every complexes of $A$-modules $M_1,\dots M_n$ which satisfy suitable finiteness conditions. Proof of this fact will appear elsewhere (the case $n=2$ is, as seen above, just a rephrase of \cite[Theorem 4.1]{AILN}). In the special case where $n=4$,  this follows easily, using Corollary \ref{cor:commute}, under an additional flatness hypothesis:

\begin{cor}
Let $\k$ be a regular noetherian ring of finite Krull dimension, and let $A$ be a flat essentially of finite type $\k$-algebra. Let $M_1,M_2,M_3,M_4 \in \mrm{D}^{\mrm{b}}_{\mrm{f}}(\opn{Mod} A)$ be four complexes. Assume that $M_1\otimes^{!}_A M_2, M_3\otimes^{!}_A M_4$ are also bounded. Then there is a quad-functorial isomorphism
\begin{eqnarray}
\nonumber
\mrm{R}\opn{Hom}_{A\otimes_{\k} A\otimes_{\k} A \otimes_{\k} A} (A,M_1 \otimes^{\mrm{L}}_{\k} M_2 \otimes^{\mrm{L}}_{\k} M_3 \otimes^{\mrm{L}}_{\k} M_4) \cong\\
\nonumber
\mrm{R}\opn{Hom}_A ( \mrm{R}\opn{Hom}_A(M_1,R_A) \otimes^{\mrm{L}}_A \mrm{R}\opn{Hom}_A(M_2,R_A) \otimes^{\mrm{L}}_A
\\\nonumber
\mrm{R}\opn{Hom}_A(M_3,R_A) \otimes^{\mrm{L}}_A \mrm{R}\opn{Hom}_A(M_4,R_A) ,R_A).
\end{eqnarray}
Hence, under the above hypothesis, the quad-functor $\mrm{R}\opn{Hom}_{A\otimes_{\k} A\otimes_{\k} A \otimes_{\k} A} (A,- \otimes^{\mrm{L}}_{\k} - \otimes^{\mrm{L}}_{\k} - \otimes^{\mrm{L}}_{\k} -)$ is canonically isomorphic to the twisting of the functor $-\otimes^{\mrm{L}}_A-\otimes^{\mrm{L}}_A-\otimes^{\mrm{L}}_A - :\mrm{D}(\opn{Mod} A)^4 \to \mrm{D}(\opn{Mod} A)$.
\end{cor}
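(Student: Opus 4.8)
The plan is to reduce the four-fold statement to the three-fold associativity formula (Corollary \ref{cor:ass}) plus the two-fold reduction formula of \cite[Theorem 4.1]{AILN}, using the flatness of $A$ over $\k$ to replace the derived tensor products $\otimes^{\mrm{L}}_{\k}$ appearing in the iterated algebra $A\otimes_{\k}A\otimes_{\k}A\otimes_{\k}A$ by ordinary tensor products, which is exactly what justifies writing $A\otimes_{\k}A\otimes_{\k}A\otimes_{\k}A$ rather than the derived four-fold product in the statement.

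The key computational step is to recognize the four-fold Hochschild complex as an \emph{iterated two-fold} Hochschild complex. Setting $B:=A\otimes_{\k}A$, flatness of $A$ gives that $B$ is flat over $\k$ and that $A\otimes_{\k}A\otimes_{\k}A\otimes_{\k}A\cong B\otimes_{\k}B$, so the left-hand side is the derived Hochschild cohomology of the $\k$-algebra $B$ with coefficients in $(M_1\otimes^{\mrm{L}}_{\k}M_2)\otimes^{\mrm{L}}_{\k}(M_3\otimes^{\mrm{L}}_{\k}M_4)$. Here I would be slightly careful: strictly speaking the reduction results are stated for coefficients of the form $P\otimes^{\mrm{L}}_{\k}Q$ with $P,Q\in\mrm{D}_{\mrm{f}}(\opn{Mod}B)$, and I would invoke \cite[Theorem 4.1]{AILN} for the algebra $B$ to rewrite the left-hand side as $D_B(P)\otimes^{!}_B$-type expression, where $P=M_1\otimes^{\mrm{L}}_{\k}M_2$ and $Q=M_3\otimes^{\mrm{L}}_{\k}M_4$ are viewed as $B$-complexes. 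Equivalently, and more in the spirit of the paper, the cleanest route is to directly identify the four-fold functor with the twist of $-\otimes^{\mrm{L}}_A-\otimes^{\mrm{L}}_A-\otimes^{\mrm{L}}_A-$ via Corollary \ref{cor:commute}, applied to the map $f:A\to A$ being the identity but with $A$ regarded as an algebra over the \emph{two-fold} base.

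Concretely, the steps in order are: first, use flatness to write $A\otimes_{\k}A\otimes_{\k}A\otimes_{\k}A\cong(A\otimes_{\k}A)\otimes_{\k}(A\otimes_{\k}A)$ and reinterpret the left-hand side as two-fold derived Hochschild cohomology of $B=A\otimes_{\k}A$ with coefficients $(M_1\otimes^{\mrm{L}}_{\k}M_2)\otimes^{\mrm{L}}_{\k}(M_3\otimes^{\mrm{L}}_{\k}M_4)$. Second, apply the reduction formula \eqref{eqn:red-hoch} to $B$, obtaining $(M_1\otimes^{\mrm{L}}_{\k}M_2)\otimes^{!}_{B}(M_3\otimes^{\mrm{L}}_{\k}M_4)$. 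Third, invoke the already-established two-fold reduction (Theorem \ref{thm:hocharetwist}) to rewrite each factor $M_1\otimes^{\mrm{L}}_{\k}M_2$ as a $B$-side Hochschild expression, and then use Corollary \ref{cor:commute} applied to the structure maps to descend from $B$-side duality back to $A$-side duality. Fourth, assemble the pieces: the repeated application of Corollary \ref{cor:rel} and the associativity isomorphism of Corollary \ref{cor:ass} collapses the nested $\otimes^{!}$ operations into the single expression $\mrm{R}\opn{Hom}_A(\mrm{R}\opn{Hom}_A(M_1,R_A)\otimes^{\mrm{L}}_A\cdots\otimes^{\mrm{L}}_A\mrm{R}\opn{Hom}_A(M_4,R_A),R_A)$, which is by definition the twist of the four-fold tensor functor.

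\textbf{The main obstacle} I anticipate is bookkeeping the compatibility of the two dualizing complexes $R_A$ and $R_B$ under the identification $B\cong A\otimes_{\k}A$: the reduction formula applied to $B$ naturally produces expressions involving $D_B$ and $R_B$, whereas the target of the statement is phrased entirely in terms of $D_A$ and $R_A$. Bridging this gap requires the base-change behavior of canonical dualizing complexes, i.e.\ that $R_{A\otimes_{\k}A}\cong R_A\otimes^{\mrm{L}}_{\k}R_A$ for $A$ flat over $\k$ (a consequence of the explicit formula for $R_A$ together with the fact that $f^{!}$ commutes with the relevant base change), so that the $B$-duality of $M_i\otimes^{\mrm{L}}_{\k}M_j$ factors as the exterior product of the $A$-dualities. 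Once this identification is in hand the remaining manipulations are the formal hom-tensor adjunctions and associativity isomorphisms already packaged in Corollaries \ref{cor:rel} and \ref{cor:ass}, and the boundedness hypotheses on $M_1\otimes^{!}_AM_2$ and $M_3\otimes^{!}_AM_4$ are exactly what is needed to invoke Corollary \ref{cor:commute} legitimately.
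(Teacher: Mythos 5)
Your overall strategy---factoring the four-fold Hochschild functor through $B=A\otimes_{\k}A$ and then combining Theorem \ref{thm:hocharetwist} with Corollary \ref{cor:commute}---is the same as the paper's, but your first step contains a genuine error that propagates through the rest. The left-hand side is $\mrm{R}\opn{Hom}_{B\otimes_{\k}B}(A,\,M_1\otimes^{\mrm{L}}_{\k}M_2\otimes^{\mrm{L}}_{\k}M_3\otimes^{\mrm{L}}_{\k}M_4)$, with $A$ in the first argument; it is \emph{not} the derived Hochschild cohomology of the $\k$-algebra $B$, which would be $\mrm{R}\opn{Hom}_{B\otimes_{\k}B}(B,-)$ and would produce a complex of $B$-modules rather than of $A$-modules. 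The two differ by exactly one application of $\mrm{R}\opn{Hom}_{B}(A,-)$: by adjunction, $\mrm{R}\opn{Hom}_{B\otimes_{\k}B}(A,X)\cong\mrm{R}\opn{Hom}_{B}(A,\mrm{R}\opn{Hom}_{B\otimes_{\k}B}(B,X))$. That extra factor is $\Delta^{!}$ for the multiplication map $\Delta:B\to A$ (which is finite, so $\Delta^{!}\cong\mrm{R}\opn{Hom}_{B}(A,-)$), and $\Delta$ is precisely the map to which Corollary \ref{cor:commute} must be applied---not ``the identity $f:A\to A$'' nor ``the structure maps'' as you suggest. Without $\Delta$, your steps two through four compute the $B$-complex $(M_1\otimes^{\mrm{L}}_{\k}M_2)\otimes^{!}_{B}(M_3\otimes^{\mrm{L}}_{\k}M_4)$ and leave no mechanism for landing in $\mrm{D}(\opn{Mod} A)$.

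Once $\Delta$ is inserted the argument closes quickly, and the ``main obstacle'' you flag evaporates. The paper writes the left-hand side as $\Delta^{!}\bigl((M_1\otimes^{\mrm{L}}_{\k}M_2)\otimes^{!}_{B}(M_3\otimes^{\mrm{L}}_{\k}M_4)\bigr)$ (using Theorem \ref{thm:hocharetwist} for the algebra $B$, flatness to identify $B\otimes^{\mrm{L}}_{\k}B$ with $A\otimes_{\k}A\otimes_{\k}A\otimes_{\k}A$, and the adjunction above), applies Corollary \ref{cor:commute} to $\Delta$ to split this as $\Delta^{!}(M_1\otimes^{\mrm{L}}_{\k}M_2)\otimes^{!}_{A}\Delta^{!}(M_3\otimes^{\mrm{L}}_{\k}M_4)$, and then identifies $\Delta^{!}(M_i\otimes^{\mrm{L}}_{\k}M_j)=\mrm{R}\opn{Hom}_{A\otimes_{\k}A}(A,M_i\otimes^{\mrm{L}}_{\k}M_j)\cong M_i\otimes^{!}_{A}M_j$ by Theorem \ref{thm:hocharetwist}; a final application of $D_A\circ D_A\cong 1$ (this is where the boundedness hypotheses on $M_1\otimes^{!}_{A}M_2$ and $M_3\otimes^{!}_{A}M_4$ are used) collapses the nested duals into the stated expression. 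At no point is the K\"unneth-type identification $R_{A\otimes_{\k}A}\cong R_A\otimes^{\mrm{L}}_{\k}R_A$ required, because $D_B$ is never unwound into $A$-side duals: every occurrence of $B$-side duality is converted back into a Hochschild or twisted-tensor expression before it needs to be computed.
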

\begin{proof}
Let $C=A\otimes_{\k} A$, and let $\Delta: C\to A$ be the diagonal map. Then by Corollary \ref{cor:commute}, there is a natural isomorphism
\[
\Delta^{!} ( (M_1\otimes^{\mrm{L}}_{\k} M_2) \otimes^{!}_C (M_3\otimes^{\mrm{L}}_{\k} M_4) ) \cong \Delta^{!}(M_1\otimes^{\mrm{L}}_{\k} M_2) \otimes^{!}_A \Delta^{!}(M_3\otimes^{\mrm{L}}_{\k} M_4).
\]
Since $\Delta$ is a finite map, $\Delta^{!}(-) \cong \mrm{R}\opn{Hom}_C(A,-)$.
By Theorem \ref{thm:hocharetwist}, the left hand side is canonically isomorphic to
\[
\mrm{R}\opn{Hom}_{A\otimes_{\k} A}(A, \mrm{R}\opn{Hom}_{A\otimes_{\k} A \otimes_{\k} A\otimes_{\k} A}(A\otimes_{\k} A, (M_1\otimes^{\mrm{L}}_{\k} M_2) \otimes^{\mrm{L}}_{\k} (M_3\otimes^{\mrm{L}}_{\k} M_4) ) )
\]
and by the derived hom-tensor adjunction this is canonically isomorphic to
\[
\mrm{R}\opn{Hom}_{A\otimes_{\k} A \otimes_{\k} A\otimes_{\k} A}(A, (M_1\otimes^{\mrm{L}}_{\k} M_2) \otimes^{\mrm{L}}_{\k} (M_3\otimes^{\mrm{L}}_{\k} M_4) ).
\]
Applying Theorem \ref{thm:hocharetwist} to the right hand side, we obtain:
\[
D_A(
D_A (D_A (D_A(M_1)\otimes^{\mrm{L}}_A D_A(M_2)))
\otimes^{\mrm{L}}_A
D_A (D_A (D_A(M_3)\otimes^{\mrm{L}}_A D_A(M_4)))
).
\]
The result now follows from the fact that $D_A \circ D_A \cong 1$ on $D_A(M_1)\otimes^{\mrm{L}}_A D_A(M_2)$ and on $D_A(M_3)\otimes^{\mrm{L}}_A D_A(M_4)$.
\end{proof}

In the remaining of this note, we analyze how the above twisting operations act on dualizing complexes. It has been known long ago (see \cite[Theorem V.3.1]{RD}) that the set of isomorphism classes of dualizing complexes over a commutative ring (at least if $\opn{Spec} A$ is connected) is classified by the Picard group of $A$ and the integers. Moreover, this set is isomorphic to a set which naturally carries a group structure, namely, the group of isomorphism classes of tilting complexes (See \cite{Ye} for details). Let $A$ be a commutative noetherian ring. Recall that a complex $P \in \mrm{D}(\opn{Mod} A)$ is called a tilting complex if there exist a complex $Q \in \mrm{D}(\opn{Mod} A)$ such that $P\otimes^{\mrm{L}}_A Q \cong A$. If $R_1,R_2$ are two dualizing complexes over $A$ then $P = \mrm{R}\opn{Hom}_A(R_1,R_2)$ is a tilting complex, and there is an isomorphism $R_2 \cong R_1 \otimes^{\mrm{L}}_A P$. The set of isomorphism classes of tilting complexes under the derived tensor product operation form an abelian group, called the derived Picard group of $A$ and denoted by $\opn{DPic}(A)$. Given any dualizing complex $R$, the duality operation $\mrm{R}\opn{Hom}_A(-,R)$ defines a bijection between the set of isomorphism classes of dualizing complexes and the set of isomorphism classes of tilting complexes.

In the next result, we observe, using the above formalism, that the set of isomorphism classes of dualizing complexes also carries naturaly a group structure, with the operation being the functor underlying derived Hochschild cohomology (that is, Shukla cohomology). 

\begin{thm}
Let $\k$ be a regular noetherian ring of finite Krull dimension, and let $A$ be an essentially finite type $\k$-algebra. Then the set $\mathcal{D}_A$ of isomorphism classes of dualizing complexes over $A$ form an abelian group with respect to the operation $\mrm{R}\opn{Hom}_{A\otimes^{\mrm{L}}_{\k} A}(A,-\otimes^{\mrm{L}}_{\k} -)$. The inverse of a dualizing complex $R$ is given by $R' := A\otimes^{\mrm{L}}_{A\otimes^{\mrm{L}}_{\k} A} \mrm{R}\opn{Hom}_{\k}(R,R_A)$. The rigid dualizing complex $R_A$ is the identity of the group. The map 
$\mrm{R}\opn{Hom}_A(-,R_A)$ is a group isomorphism between $\mathcal{D}_A$ and $\opn{DPic}(A)$. If $B$ is another essentially finite type $\k$ algebra, and $f:A\to B$ is a $\k$-algebra map then $f^{!}:\mathcal{D}_A \to \mathcal{D}_B$ is a group homomorphism.
\end{thm}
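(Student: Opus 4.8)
The plan is to transport the abelian group structure that $\opn{DPic}(A)$ is already known to carry onto $\mathcal{D}_A$ along the duality bijection $D_A = \mrm{R}\opn{Hom}_A(-,R_A)$. The text records that, for the fixed dualizing complex $R_A$, this operation is a bijection from isomorphism classes of dualizing complexes to isomorphism classes of tilting complexes; the entire content of the theorem reduces to showing that under this bijection the operation $-\otimes^{!}_A-$ corresponds to the derived tensor product $-\otimes^{\mrm{L}}_A-$, after which everything follows formally.

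The one genuine computation is the multiplicativity identity
\[
D_A(M \otimes^{!}_A N) \cong D_A(M) \otimes^{\mrm{L}}_A D_A(N)
\]
for dualizing complexes $M,N$. By the definition of $-\otimes^{!}_A-$ we have $M \otimes^{!}_A N = D_A(D_A(M) \otimes^{\mrm{L}}_A D_A(N))$. Since $M,N$ are dualizing, $D_A(M)$ and $D_A(N)$ are tilting, hence lie in $\mrm{D}^{\mrm{b}}_{\mrm{f}}(\opn{Mod} A)$, and so does their derived tensor product (a product of tilting complexes is tilting, by commuting the factors). Applying $D_A$ once more and invoking biduality $D_A \circ D_A \cong \mathrm{id}$ on $\mrm{D}^{\mrm{b}}_{\mrm{f}}(\opn{Mod} A)$ yields the identity; the same computation shows the operation is closed, since $M \otimes^{!}_A N = D_A(\text{tilting complex})$ is again dualizing. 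Because $D_A$ is thus a bijection $\mathcal{D}_A \to \opn{DPic}(A)$ intertwining $-\otimes^{!}_A-$ with the abelian group operation $-\otimes^{\mrm{L}}_A-$, it transports the group structure: $(\mathcal{D}_A, \otimes^{!}_A)$ is an abelian group and $D_A$ is a group isomorphism. The unit is the preimage of $A \in \opn{DPic}(A)$, and since $D_A(R_A) = \mrm{R}\opn{Hom}_A(R_A,R_A) \cong A$, the rigid dualizing complex $R_A$ is the identity. For the inverse, I would recognize $R' = A\otimes^{\mrm{L}}_{A\otimes^{\mrm{L}}_{\k} A} \mrm{R}\opn{Hom}_{\k}(R,R_A)$ as $\opn{Hom}^{!}_A(R,R_A)$ via Theorem \ref{thm:hocharetwist}(2); expanding this as $D_A(\mrm{R}\opn{Hom}_A(D_A(R), D_A(R_A))) = D_A(\mrm{R}\opn{Hom}_A(D_A(R),A))$ and applying $D_A$ gives $D_A(R') \cong \mrm{R}\opn{Hom}_A(D_A(R),A)$, which is precisely the $\opn{DPic}(A)$-inverse of $D_A(R)$, so $R' = R^{-1}$.

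For the functoriality statement, I would first verify that $f^{!}$ maps $\mathcal{D}_A$ into $\mathcal{D}_B$ by reading off $f^{!} = D_B \circ \mrm{L}f^{*} \circ D_A$: a dualizing complex $R$ is sent to the tilting complex $D_A(R)$, whose base change $\mrm{L}f^{*}(D_A(R)) = D_A(R)\otimes^{\mrm{L}}_A B$ is tilting over $B$, and $D_B$ carries tilting complexes back to dualizing complexes, being the inverse of the duality bijection over $B$. The homomorphism property $f^{!}(M \otimes^{!}_A N) \cong f^{!}(M) \otimes^{!}_B f^{!}(N)$ is then exactly Corollary \ref{cor:rel}(1), whose hypotheses $M,N \in \mrm{D}^{+}_{\mrm{f}}(\opn{Mod} A)$ are satisfied by dualizing complexes.

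I expect the main obstacle — in fact the only point requiring real care — to be bookkeeping of the finiteness and boundedness conditions that license the application of $D_A \circ D_A \cong \mathrm{id}$, which is the hinge on which the multiplicativity identity turns. Since dualizing and tilting complexes all lie in $\mrm{D}^{\mrm{b}}_{\mrm{f}}(\opn{Mod} A)$, and the derived tensor product of two tilting complexes remains there, this causes no actual difficulty; but it must be checked explicitly, because outside $\mrm{D}^{\mrm{b}}_{\mrm{f}}$ the operation $-\otimes^{!}_A-$ need neither be closed on $\mathcal{D}_A$ nor compatible with $D_A$.
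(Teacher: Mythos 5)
Your proof is correct and follows essentially the same route as the paper's: both hinge on the multiplicativity identity $D_A(R_1\otimes^{!}_A R_2)\cong D_A(R_1)\otimes^{\mrm{L}}_A D_A(R_2)$, established by observing that $D_A(R_1)\otimes^{\mrm{L}}_A D_A(R_2)$ is a tilting complex (hence lies in $\mrm{D}^{\mrm{b}}_{\mrm{f}}(\opn{Mod} A)$) and applying biduality, together with Theorem \ref{thm:hocharetwist} to identify $-\otimes^{!}_A-$ and $\opn{Hom}^{!}_A(-,-)$ with the derived Hochschild functors. The only organizational difference is that you transport the whole group structure from $\opn{DPic}(A)$ along $D_A$, which yields associativity for free, whereas the paper verifies the axioms directly and invokes Corollary \ref{cor:ass} for associativity; your check that $f^{!}$ carries $\mathcal{D}_A$ into $\mathcal{D}_B$ (via base change of tilting complexes) is also slightly more self-contained than the paper's appeal to this fact being well known.
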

\begin{proof}
First, suppose that $R_1,R_2$ are dualizing complexes over $A$. Then $D_A(R_1)$ and $D_A(R_2)$ are tilting complexes, so that $P = D_A(R_1)\otimes^{\mrm{L}}_A D_A(R_2)$ is also a tilting complex. Hence, 
\[
D_A(P) = \mrm{R}\opn{Hom}_A(P,R_A) \cong \mrm{R}\opn{Hom}_A(P,A)\otimes^{\mrm{L}}_A R_A.
\]
But $\mrm{R}\opn{Hom}_A(P,A)$ is also tilting, so that $D_A(P) \cong R_1\otimes^{!}_A R_2$ is a dualizing complex, so by Theorem \ref{thm:hocharetwist}, the complex
\[
\mrm{R}\opn{Hom}_{A\otimes^{\mrm{L}}_{\k} A}(A,R_1\otimes^{\mrm{L}}_{\k} R_2)
\]
is also a dualizing complex. Next, let $R$ be a dualizing complex over $A$. Let $R' = A\otimes^{\mrm{L}}_{A\otimes^{\mrm{L}}_{\k} A} \mrm{R}\opn{Hom}_{\k}(R,R_A) \cong \opn{Hom}^!_A(R,R_A)$. A similar calculation to the above now shows that $R'$ is a dualizing complex, and that 
\[
\mrm{R}\opn{Hom}_{A\otimes^{\mrm{L}}_{\k} A}(A,R\otimes^{\mrm{L}}_{\k} R') \cong R \otimes^{!}_A R' \cong R_A. 
\]
By Corollary \ref{cor:ass}, the operation $\mrm{R}\opn{Hom}_{A\otimes^{\mrm{L}}_{\k} A}(A,-\otimes^{\mrm{L}}_{\k}-)$ is associative.
It follows that $\mathcal{D}_A$ is an abelian group. It is clear that the map $D_A(-):\mathcal{D}_A \to \opn{DPic}(A)$ is bijective (the inverse map is also $D_A$). To see that it is a group map, simply note that 
\begin{eqnarray}
D_A(\mrm{R}\opn{Hom}_{A\otimes^{\mrm{L}}_{\k} A}(A,R_1\otimes^{\mrm{L}}_{\k} R_2)) \cong
D_A(R_1\otimes^{!}_A R_2) =\nonumber\\ 
=D_A( D_A( D_A(R_1)\otimes^{\mrm{L}}_A D_A(R_2) ) ) \cong D_A(R_1)\otimes^{\mrm{L}}_A D_A(R_2)\nonumber.
\end{eqnarray}
Finally, if $f:A\to B$ is a $\k$-algebra map, then it is well known that $f^{!}$ maps $\mathcal{D}_A$ to $\mathcal{D}_B$, and Corollary \ref{cor:commute} shows that it is a homomorphism.
\end{proof}

\begin{rem}
Returning to the symmetric monoidal structure on $\mrm{D}^{+}_{\mrm{f}}(\opn{Mod} A)$ described in Remark \ref{rem:monoidal} above, one may easily see that the units of this structure are precisely the dualizing complexes. Thus, the fundamental result that $f^{!}$ carries dualizing complexes to dualizing complexes is expressed by the fact that monoidal functors carry unit elements to unit elements. Similarly, the fact that $f^{!}$ carries rigid dualizing complexes to rigid dualizing complexes (\cite[Theorem 0.1]{YZ2}, \cite[Corollary 3.3.5]{AIL2}) can now be expressed simply by saying that monoidal functors preserve the monoidal identity.
\end{rem}

We end this note with a couple remarks about possible generalizations of the above theory.

\begin{rem}
In \cite[Corollary 6.5]{AILN}, there is a global version of the reduction formula for derived Hochschild cohomology under the additional assumption that the given scheme is flat over the base. A similar result for derived Hochschild homology is shown in \cite[Theorem 4.1.8]{ILN}. Using these results, all results of this note immediately generalize to the global case of schemes, under the additional assumption that they are flat over $\k$.
\end{rem}

\begin{rem}
Another possible generalization is relaxing the assumptions on $\k$. As a first step, one can relax the regularity assumption and assume instead that $\k$ is Gorenstein. Rigid dualizing complexes still exist (see \cite[Theorem 8.5.6]{AIL1}), so most of the above will still make sense and will be true, under the additional assumption that all algebras are of finite flat dimension over $\k$. Going further, we can simply assume that $\k$ is a noetherian ring. Then, in the (possible) absence of dualizing complexes, we may use instead the notion of a relative dualizing complex (see \cite[Section 1]{AILN}). Again, we will have to assume that all algebras are of finite flat dimension over $\k$, and further, to have the biduality isomorphism of \cite[Theorem 1.2]{AILN}, we must assume that all complexes involved are also of finite flat dimension over $\k$.
\end{rem}

\textbf{Acknowledgments.}
The author would like to thank Professor Joseph Lipman and Professor Amnon Yekutieli for some useful suggestions. The author also wish to thank the Department of Mathematics at the Weizmann Institute of Science, where this work was carried out.

\end{document}